 \newtheorem{theorem}{Theorem}[section]
 \newtheorem{corollary}[theorem]{Corollary}
 \newtheorem{lemma}[theorem]{Lemma}
 \newtheorem{proposition}[theorem]{Proposition}
 \theoremstyle{definition}
 \theoremstyle{remark}
 \newtheorem{remark}[theorem]{Remark}
  \numberwithin{equation}{section}
\renewcommand{\phi}{\varphi}
\renewcommand{\theta}{\vartheta}
\DeclareMathOperator{\tform}{\mathfrak{t}}
\DeclareMathOperator{\wform}{\mathfrak{w}}
\DeclareMathOperator{\trace}{Tr}
\DeclarePairedDelimiterX\sipt[2]{(}{)_{\tform}}{#1\,\delimsize\vert\,#2}
\DeclarePairedDelimiterX\sipv[2]{(}{)_{v}}{#1\,\delimsize\vert\,#2}
\DeclarePairedDelimiterX\sipw[2]{(}{)_{w}}{#1\,\delimsize\vert\,#2}
\newcommand{\alg}{\mathscr{A}}
\newcommand{\abs}[1]{\lvert#1\rvert}
\newcommand{\dupN}{\mathbb{N}}
\newcommand{\seq}[1]{(#1_{n})_{n\in\dupN}}
\newcommand{\nen}{n\in\mathbb{N}}
\newcommand{\ran}{\operatorname{ran}}
\newcommand{\finh}{\mathscr{B}_F(\hil)}
\newcommand{\hil}{H}
\newcommand{\bh}{\mathrm{B}(\hil)}
\DeclarePairedDelimiterX\sip[2]{(}{)}{#1\,\delimsize\vert\,#2}
\DeclarePairedDelimiterX\siptilde[2]{(}{)_{\!_{\widetilde{A}}}}{#1\,\delimsize\vert\,#2}
\DeclarePairedDelimiterX\sipf[2]{(}{)_{f}}{#1\,\delimsize\vert\,#2}
\DeclarePairedDelimiterX\sipg[2]{(}{)_{g}}{#1\,\delimsize\vert\,#2}
\DeclarePairedDelimiterX\siptw[2]{(}{)_{\tform+\wform}}{#1\,\delimsize\vert\,#2}
\DeclarePairedDelimiterX\set[2]{\{}{\}}{#1\,\delimsize\vert\,#2}
\DeclarePairedDelimiterX\dual[2]{\langle}{\rangle}{#1,#2}
\DeclarePairedDelimiterX\sipa[2]{(}{)_{\!_A}}{#1\,\delimsize\vert\,#2}
\DeclarePairedDelimiterX\sipc[2]{(}{)_{\!_C}}{#1\,\delimsize\vert\,#2}
\DeclarePairedDelimiterX\sipab[2]{(}{)_{\!_{A+B}}}{#1\,\delimsize\vert\,#2}
\DeclarePairedDelimiterX\sipb[2]{(}{)_{\!_B}}{#1\,\delimsize\vert\,#2}
\newcommand{\hsh}{\mathrm{B}_2(\hil)}
\newcommand{\trh}{\mathrm{B}_1(\hil)}
\begin{document}
\title{On the uniqueness of the Lebesgue decomposition of normal states}
\author[Zs. Tarcsay]{Zsigmond Tarcsay}

\address{%
Department of Applied Analysis\\ E\"otv\"os L. University\\ P\'azm\'any P\'eter s\'et\'any 1/c.\\ Budapest H-1117\\ Hungary}

\email{tarcsay@cs.elte.hu}

\author[T. Titkos]{Tam\'as Titkos}

\address{%
Alfr\'ed R\'enyi Institute of Mathematics\\
Hungarian Academy of Sciences\\Re\'altanoda u. 13-15.\\
Budapest H-1053\\ Hungary}

\email{titkos.tamas@renyi.mta.hu}


\subjclass{Primary 46L30, 46L51}

\keywords{Lebesgue decomposition, absolute continuity, singularity,  normal state, trace class operator, positive operator}
\maketitle

\begin{abstract}
\textbf{This manuscript draft has never been published and is not submitted for publication. For a substantially revised and extended version see arXiv:1608.03733 [math.FA]. See also arXiv:1710.06830 [math.FA].}\\

The non-commutative theory of the Lebesgue-type decomposition of positive functionals is originated with S.~P.~Gudder. Although H.~Kosaki's counterexample shows that the decomposition is not unique in general, the complete characterization of uniqueness is still not known.

Using the famous operator-decomposition of T.~Ando, we give a necessary and sufficient condition for uniqueness in the particular case when the underlying algebra is $\bh$, the $C^*$-algebra of all continuous linear operators on a Hilbert space $\hil$. Namely, given a normal state $f$, the $f$-Lebesgue decomposition of any other normal state is unique if and only if the representing trace class operator of $f$ has finite rank. 

Some recent results tell that the decomposition is unique over a large class of commutative algebras. Our characterization demonstrates that the lack of commutativity is not the real cause of non-uniqueness.
\end{abstract}

\section*{Introduction} 
An important noncommutative generalization of the classical Lebesgue decomposition is originated with S. P. Gudder \cite{Gudder}. He proved that any positive functional on a complex unital Banach $^*$-algebra admits a Lebesgue decomposition with respect to any other positive functional (on the same algebra). His result has been recently extended to representable functionals on any  $^*$-algebra \cite{Tarcsay_repr}, and to representable forms on a complex algebra \cite{Szucs2013}. However, Gudder mentioned that he has not been able to prove the unicity of the corresponding decomposition. Few years later H. Kosaki \cite{Kosaki} provided a counterexample demonstrating that such a decomposition does not need to be unique, not even considering normal states on a von Neumann algebra. 

Nevertheless, Kosaki's example does not give any criterion for deciding whether the Lebesgue decomposition of a given normal state relative to another is unique or not. The main goal of this note is to provide a complete solution to this problem among normal states of the von Neumann algebra $\bh$. The two cornerstones of our approach are: Theorem \ref{T:maintheorem} below describing the very close connection between the corresponding  absolute continuity and singularity concepts for normal functionals and their representing operators; and T. Ando's result \cite[Theorem 6]{Ando} characterizing the uniqueness of the Lebesgue decomposition among bounded positive operators.

Let us start by recalling some basic definitions of non-commutative Lebesgue decomposition theory, developed by Gudder \cite{Gudder} (for the more general setting of $^*$-algebras the reader is referred to \cite{Tarcsay_repr}). Suppose we are given a unital Banach $^*$-algebra $\alg$ and a positive functional $f$ on it. Positivity here is understood to mean that $f(a^*a)\geq0$ for all $a\in\alg$. Given another positive functional $g$ on $\alg$, following the terminology of Gudder \cite{Gudder}, we say that $g$ is $f$-\emph{dominated} if there exists $M\geq 0$ such that $g(a^*a)\leq  Mf(a^*a)$ for all $a\in\alg$. Furthermore, $g$ is called $f$-\emph{closable} (or  strongly $f$-\emph{absolutely continuous}, see \cite{Gudder}) if for any sequence $\seq{a}$ of $\alg$
\begin{equation*}
f(a_n^*a_n)\to0\qquad \textrm{and}\qquad g((a_n-a_m)^*(a_n-a_m))\to 0
\end{equation*} 
imply  $g(a_n^*a_n)\to0$. Finally, $f$ and $g$ are called \emph{mutually singular} if for any positive functional $h$ on $\alg$, $h\leq f$ and $h\leq g$ imply $h=0$. We mention here that Gudder \cite{Gudder} used a non-symmetric singularity concept, called semi-singularity. Because of equivalency (see \cite[Theorem 5.2]{Tarcsay_parallel}), we use the symmetric one instead. A Lebesgue type decomposition of $g$ relative to $f$ is a pair $(g_0,g_1)$ of positive functionals such that $g=g_0+g_1$, where $g_0$ is $f$-closable and $g_1$ is $f$-singular. Such a decomposition always exists, according to \cite[Corollary 3]{Gudder}. Moreover, in view of \cite[Theorem 3.3]{Tarcsay_repr}, there is a Lebesgue decomposition $(g_r, g_s)$ which is extremal in the sense that $h\leq g_r$ holds for any $f$-closable positive functional $h$ with $h\leq g$. Here, $g_r$ is called the $f$-\emph{regular part} of $g$.

The word \lq\lq closable" refers to the fact that a functional $g$ is $f$-closable precisely when considering the GNS-triplets $(\hil_f,\pi_f,\zeta_f)$ and $(\hil_g,\pi_g,\zeta_g)$, the correspondence $\pi_f(a)\zeta_f\mapsto\pi_g(a)\zeta_g$ defines a closable linear operator between $\hil_f$ and $\hil_g$ (cf. \cite{Tarcsay_repr}). One major advantage of this observation is that one can apply unbounded operator techniques to treat the Lebesgue decomposition and the Radon--Nikodym type theory, see eg. \cite{Araki,Sakai,Tarcsay_RN, Tarcsay_repr}. In the present paper we use an equivalent for the property of $f$-closability, namely the notion of "almost domination" as follows: the positive functional $g$ is \emph{almost dominated} by $f$ (in notion, $g\ll f$) if $g$ is the pointwise limit of a sequence $\seq{g}$ of positive functionals possessing the properties $g_n\leq g_{n+1}\leq g$, and $g_n\leq c_nf$ for some $c_n\geq0$. The equivalence of these concepts can be proved using to the notion of \emph{parallel sum} of two positive functionals (\cite[Theorem 5.1]{Tarcsay_parallel}  and due to \cite[Theorem 3.8]{Hassi2009}; see also \cite[Theorem 2.15]{Szucs_abs}).

\section{Closability and singularity of positive functionals on $\bh$}

Given a complex Hilbert space $\hil$, we denote by $\bh$  the $C^*$-algebra of continuous linear operators on $\hil$, and by $\hsh, \trh$ and $\finh$  the ideals of  Hilbert--Schmidt, trace class, and continuous finite rank operators in $\bh$. Recall that $\hsh$ is a complete Hilbert algebra with respect to the scalar product
\begin{eqnarray}\label{E:siphsh}
\sip{S}{T}_2=\trace(T^*S)=\sum_{e\in E}\sip{Se}{Te},\qquad S,T\in\hsh,
\end{eqnarray}
where $\trace$ refers to the the trace functional and $E$ is any orthonormal basis in $\hil$. Furthermore, $\trh$ is a Banach $^*$-algebra  under the norm $\|A\|_1:=\trace (\abs{A})$. It is also well known that $\finh$ is dense in both $\hsh$ and $\trh$, with respect to the norms $\|\cdot\|_2$ and $\|\cdot\|_1$, respectively. Recall also that $A\in\trh$ holds if and only if $A$ is the product of two Hilbert--Schmidt operators. For further basic properties of Hilbert-Schmidt and trace class operators we refer the reader to \cite{KR1, Murphy, palmer, Pedersen}.

For any  $T\in\trh$  we set 
\begin{eqnarray}\label{E:f_T}
f_T(A):=\trace(AT)=\trace(TA),\qquad A\in\bh,
\end{eqnarray}
which defines a continuous linear functional on $\bh$ due to inequality 
\begin{eqnarray}\label{trace inequality}
\abs{\trace(AT)}\leq \|A\|\|T\|_1.
\end{eqnarray}
Functionals of this type are called \emph{normal functionals}. 

It is not difficult to prove that  $f_T$ is positive if and only if $T$ is positive
Before establishing the statement we present two lemmas which are consequences of \cite[Theorem 5.6]{SSZT} concerning positive extendibility of linear functionals. 
 
\begin{lemma}\label{L:fS=fN}
Let $f$ be a normal positive functional on $\bh$ and denote by $\phi$ the restriction of $f$ to the ideal $\finh$ of finite rank operators. Then $f$ is the smallest among all positive functionals extending $\phi$. (In the language of \cite{SSZT}, $f$ equals the Krein-von Neumann extension $\phi_N$ of $\phi$.)
\end{lemma}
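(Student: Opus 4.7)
The plan is to bypass an explicit construction of the Krein--von Neumann extension and directly verify its universal property: for every positive functional $g$ on $\bh$ that extends $\phi$, we have $f \leq g$. Since $f$ itself extends $\phi$, this immediately shows that $f$ is the smallest such extension and hence coincides with $\phi_N$.

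Fix such an extension $g$ and an arbitrary $A \in \bh$. The key observation is that $\finh$ is a two-sided ideal of $\bh$, so for any finite-rank orthogonal projection $P$ one has $PA \in \finh$ and in turn $A^{*}PA = (PA)^{*}(PA) \in \finh$; consequently $g(A^{*}PA) = \phi(A^{*}PA) = f(A^{*}PA)$. Applying the Cauchy--Schwarz inequality for the positive sesquilinear form $(X,Y) \mapsto g(Y^{*}X)$ to the pair $X = A$, $Y = PA$, and using $P = P^{*} = P^{2}$, both $(PA)^{*}A$ and $(PA)^{*}(PA)$ reduce to $A^{*}PA$, so the inequality becomes
\[
f(A^{*}PA)^{2} \;=\; |g(A^{*}PA)|^{2} \;\leq\; g(A^{*}PA)\cdot g(A^{*}A) \;=\; f(A^{*}PA)\cdot g(A^{*}A),
\]
from which $f(A^{*}PA) \leq g(A^{*}A)$ (trivially so when $f(A^{*}PA) = 0$).

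To conclude, let $P$ range over the upward-directed net of finite-rank orthogonal projections, which converges strongly to the identity. Then $(A^{*}PA)_{P}$ is an increasing net of positive operators, bounded above by $A^{*}A$ and having strong limit $A^{*}A$, so the normality of $f$ yields $f(A^{*}PA) \nearrow f(A^{*}A)$. Passing to the supremum on the left gives $f(A^{*}A) \leq g(A^{*}A)$, as required. The subtle point driving the whole proof is the asymmetry between $f$ and $g$: the extension $g$ is \emph{not} assumed normal, so no continuity property is available for it beyond what Cauchy--Schwarz extracts on $\finh$; the Cauchy--Schwarz step is precisely the device that shifts the burden of continuity from $g$ onto $f$, where normality can then be invoked. (In the non-separable case one must work with nets of projections rather than sequences, but the monotone-convergence formulation of normality absorbs this modification without effort.)
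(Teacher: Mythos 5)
Your proof is correct, but it takes a genuinely different route from the paper's. The paper does not verify the universal property directly: it invokes \cite[Theorem 5.6]{SSZT} to obtain the existence of a smallest positive extension $\phi_N$ of $\phi$, together with the variational formula $\phi_N(X^*X)=\sup\set{\abs{\phi(X^*A)}^2}{A\in\finh,\ \phi(A^*A)\leq1}$, and then proves $f=\phi_N$ by noting that $f-\phi_N\geq0$ by minimality and that $\phi_N(I)\geq\trace(T)$, the latter obtained by testing the supremum with $A=\trace(T)^{-1/2}P$ for finite rank projections $P$. You instead bypass the existence result entirely and show directly that any positive extension $g$ dominates $f$, via Cauchy--Schwarz for the form $(X,Y)\mapsto g(Y^*X)$ applied to the pair $(A,PA)$, followed by the monotone limit $f(A^*PA)\nearrow f(A^*A)$. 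Your argument is self-contained and more elementary (it needs no external extension theorem, only the ideal property of $\finh$ and the monotone continuity of normal functionals), whereas the paper's route reuses machinery from \cite{SSZT} that is needed again in Lemma \ref{L:g=fS}; the underlying mechanism is nonetheless the same Krein--von Neumann variational principle, which your Cauchy--Schwarz step reproduces by hand. One small point worth making explicit: the paper defines normality of $f$ as being of the form $f_T$ with $T\in\trh$, so the monotone continuity you invoke should be justified by a one-line computation such as $f(A^*A)-f(A^*PA)=\trace\bigl(T^{1/2}A^*(I-P)AT^{1/2}\bigr)=\|(I-P)AT^{1/2}\|_2^2\to0$, which also confirms that the net formulation causes no difficulty in the non-separable case.
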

%
\begin{lemma}\label{L:g=fS}
Let $f$ be any positive functional on $\bh$ and let $\phi$ denote its restriction to $\finh$.
\begin{enumerate}[\upshape a)]
 \item The smallest positive (Krein-von Neumann) extension of $\phi$ to $\bh$ is always normal.
 \item If there exists a normal positive functional $g$ with $f\leq g$ then $f$ is normal too. 
\end{enumerate}
\end{lemma}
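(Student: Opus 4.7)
The plan is to establish (a) by constructing a positive trace class operator $T$ whose associated normal functional $f_T$ restricts to $\phi$ on $\finh$, and then identifying $f_T$ with $\phi_N$ via Lemma \ref{L:fS=fN}. Part (b) will then follow by showing that the positive functional $f-\phi_N$, which is dominated by a normal functional and vanishes on $\finh$, must vanish identically.

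For (a), I would define the sesquilinear form $s(x,y):=\phi(x\otimes y)$ on $\hil\times\hil$, where $x\otimes y\in\finh$ is the rank-one operator $z\mapsto\sip{z}{y}x$. Because $f$ is automatically continuous (being positive on a unital Banach $*$-algebra) and $\|x\otimes y\|=\|x\|\|y\|$, the form $s$ is bounded, and the Riesz representation yields a unique $T\in\bh$ with $\sip{Tx}{y}=\phi(x\otimes y)$; the positivity $T\geq 0$ is immediate from $\sip{Tx}{x}=\phi(x\otimes x)\geq 0$, since $x\otimes x$ is a positive element of $\bh$. To see $T\in\trh$, fix an orthonormal basis $\{e_\alpha\}$: for any finite $F$, $P_F:=\sum_{\alpha\in F}e_\alpha\otimes e_\alpha$ is an orthogonal projection with $P_F\leq I$, whence
\[
\sum_{\alpha\in F}\sip{Te_\alpha}{e_\alpha}=\phi(P_F)=f(P_F)\leq f(I)<\infty,
\]
so $\trace(T)\leq f(I)$. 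A short computation on rank-one factors gives $\trace((x\otimes y)T)=\sip{Tx}{y}$; by linearity $f_T|_{\finh}=\phi$, and Lemma \ref{L:fS=fN} applied to the normal functional $f_T$ identifies it with the smallest positive extension of $\phi$, that is $f_T=\phi_N$.

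For (b), write $g=f_{T_g}$ with $T_g\in\trhp$, set $\phi:=f|_{\finh}$, and apply (a) to obtain $S\in\trhp$ with $\phi_N=f_S$. Since $\phi_N\leq f\leq g$, the order-preserving correspondence $T\mapsto f_T$ yields $T':=T_g-S\in\trhp$, and $h:=f-\phi_N$ is a positive functional with $h\leq f_{T'}$ and $h|_{\finh}=0$. Cauchy--Schwarz for the sesquilinear form $(A,B)\mapsto h(A^*B)$ together with $h(P)=0$ for every finite-rank projection $P$ gives $h(PA)=h(AP)=0$ for all $A\in\bh$; expanding $A=PAP+PA(I-P)+(I-P)AP+(I-P)A(I-P)$ therefore yields $h(A)=h((I-P)A(I-P))$. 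Choosing $P_n$ to be the spectral projection of $T'$ onto the span of eigenvectors for its $n$ largest eigenvalues gives $\|(I-P_n)T'(I-P_n)\|_1\to 0$, and for $A\geq 0$,
\begin{align*}
0\leq h(A) &= h\bigl((I-P_n)A(I-P_n)\bigr)\\
&\leq f_{T'}\bigl((I-P_n)A(I-P_n)\bigr)\\
&= \trace\bigl(A(I-P_n)T'(I-P_n)\bigr)\\
&\leq \|A\|\,\|(I-P_n)T'(I-P_n)\|_1\to 0.
\end{align*}
Hence $h$ vanishes on all positive operators in $\bh$ and, by linearity, everywhere, so $f=\phi_N=f_S$ is normal. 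The main technical point is adapting the projections to the spectral decomposition of $T'$: an arbitrary sequence of finite-rank projections $P_n\nearrow I$ strongly need not make $\|(I-P_n)T'(I-P_n)\|_1$ tend to zero, yet trace-norm vanishing is what the estimate requires in order to dominate the factor $\|A\|$ for a fixed bounded $A$.
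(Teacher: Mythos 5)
Your proof is correct and follows essentially the same route as the paper: part (a) produces a positive trace class operator representing $\phi$ and invokes Lemma \ref{L:fS=fN}, and part (b) kills the positive functional $h=f-\phi_N$, which vanishes on $\finh$ and is dominated by a normal functional, by pushing it past a large finite-rank projection. The only (harmless) implementation differences are that you obtain the representing operator from the bounded sesquilinear form $(x,y)\mapsto\phi(x\otimes y)$ rather than from the Riesz representation of $f$ restricted to the Hilbert space $\hsh$, and that in (b) you estimate $h(A)$ for every positive $A$ via spectral projections of $T'$, whereas the paper only needs $h(I)=0$ (which already forces $h=0$, since $\|h\|=h(I)$ for positive functionals on a unital $C^*$-algebra).
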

Recall now the notions of absolute continuity and singularity among positive operators due to Ando \cite{Ando}. A positive operator $S\in\bh$ is called \emph{absolutely continuous} with respect to another positive operator $T\in\bh$ (in notion $S\ll T$) if there is a monotone increasing sequence $\seq{S}$ of positive operators such that  $S_n\leq c_n T$ for some $c_n\geq0$ (i.e., $S_n$ is $T$-dominated for all $n\in\mathbb{N}$) and $S_n\to S$ in the strong operator topology. We say that $S$ and $T$ are \emph{singular} with respect to each other (in notion $S\perp T$) if $R\leq S$ and $R\leq T$ imply $R=0$ for any positive operator $R$. Ando's decomposition theorem (\cite[Theorem 2]{Ando}) asserts that any $S$ splits into a $T$-absolutely continuous part $[T]S$ and a $T$-singular part $S-[T]S$ where $[T]S$ possesses the extremal property that $R\ll T$ and $R\leq S$ imply $R\leq [T]S$. 

A natural question arises here: what is the connection between the corresponding regularity and singularity concepts of trace class operators, and their induced normal states? The answer (which is given in the following theorem) plays a key role by solving the uniqueness problem.

\begin{theorem}\label{T:maintheorem}
Let $S,T$ be positive trace class operators on $\hil$. 
\begin{enumerate}[\upshape a)]
 \item $S$ and $T$ are mutually singular precisely if $f_S$ and $f_T$ are.
 \item $S$ is absolutely continuous with respect to $T$ precisely if $f_S$ is almost dominated by $f_T$.
\end{enumerate}
\end{theorem}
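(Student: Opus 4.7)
The plan is to reduce both assertions to the order theory of positive trace class operators, exploiting two facts gathered from the paragraph preceding Lemma \ref{L:fS=fN} and from Lemma \ref{L:g=fS}. First, the map $T\mapsto f_T$ is a bijection between positive trace class operators and normal positive functionals that both preserves and reflects the order: if $R,S\in\trh$ are positive and $f_R\leq f_S$, then $f_{S-R}\geq 0$ and the rank-one projection trick preceding Lemma \ref{L:fS=fN} yields $S-R\geq 0$. Second, Lemma \ref{L:g=fS}(b) guarantees that any positive functional dominated by a normal one is automatically normal, hence of the form $f_R$ for some positive $R\in\trh$.

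For part a), suppose first that $S\perp T$ and let $h$ be a positive functional with $h\leq f_S$ and $h\leq f_T$. Normality of $f_T$ combined with Lemma \ref{L:g=fS}(b) forces $h=f_R$ for some positive $R\in\trh$, and order reflection gives $R\leq S$ and $R\leq T$, whence $R=0$ and $h=0$. Conversely, if $f_S\perp f_T$ and $0\leq R\leq S$, $R\leq T$, then $R$ is trace class and $f_R$ is dominated by both $f_S$ and $f_T$, so $f_R=0$ and thus $R=0$.

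For part b), suppose first that $S\ll T$ in Ando's sense, witnessed by positive operators $S_n\leq c_n T$ with $S_n$ increasing and $S_n\to S$ strongly. Since $0\leq S_n\leq S\in\trh$, each $S_n$ is trace class, and monotone convergence applied to $\trace(S-S_n)=\sum_{e\in E}\sip{(S-S_n)e}{e}$, dominated by $\trace(S)$, gives $\|S_n-S\|_1\to 0$. Setting $g_n:=f_{S_n}$ then yields an increasing sequence of positive functionals with $g_n\leq c_n f_T$, $g_n\leq f_S$, and $g_n\to f_S$ pointwise on $\bh$ by \eqref{trace inequality}; hence $f_S\ll f_T$. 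Conversely, if $f_S$ is almost dominated by $f_T$ via a sequence $g_n\leq g_{n+1}\leq f_S$ with $g_n\leq c_n f_T$ and $g_n\to f_S$ pointwise, then Lemma \ref{L:g=fS}(b) applied to each $g_n$ gives $g_n=f_{R_n}$ with positive $R_n\in\trh$, and order reflection yields $R_n\leq R_{n+1}\leq S$ together with $R_n\leq c_n T$. The monotone bounded sequence $(R_n)$ converges strongly to some positive $R\leq S$; evaluating the pointwise convergence $f_{R_n}\to f_S$ on the rank-one projection onto $\Span\{e\}$ shows $\sip{Re}{e}=\sip{Se}{e}$ for every unit vector $e$, hence $R=S$, which is precisely the definition of $S\ll T$.

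The main technical point is the passage from strong convergence $S_n\to S$ to trace-norm convergence in the forward direction of b), since we need convergence of $f_{S_n}$ at \emph{every} operator in $\bh$, not merely at the finite rank ones; the reverse direction is cleaner, as testing against rank-one projections immediately converts pointwise functional convergence into strong convergence of a monotone bounded operator sequence.
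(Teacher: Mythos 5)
Your proposal is correct and follows essentially the same route as the paper: part a) via Lemma \ref{L:g=fS}(b) and order reflection of $T\mapsto f_T$, and part b) by testing against rank-one projections in one direction and upgrading strong convergence of $(S_n)$ to trace-norm convergence in the other. The only cosmetic difference is that you package the step $\trace(S-S_n)\to 0$ as a monotone/dominated convergence argument for sums, where the paper writes out the same estimate with an explicit $\varepsilon/2$ splitting over a finite part of an orthonormal basis.
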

\begin{proof}
 Let us prove a) first: assume that $S$ and $T$ are mutually singular, and let $g$ be any positive functional on $\bh$ such that $g\leq f_T, f_S$. Then, by Lemma \ref{L:g=fS}, $g=f_R$ for some positive trace class operator $R$, and clearly, $R\leq S,T$. Hence $R=0$ by singularity of $S$ and $T$, which yields $g=0$. Conversely, if $f_T$ and $f_S$ are mutually singular and $R\in\bh$ is any positive operator with $R\leq S$ and $R\leq T$, then $R$ must be a trace class operator satisfying $f_R\leq f_S,f_T$. Consequently, $f_R=0$ and thus $R=0$. 
 
 Let us turn to the proof of b): assume first that $f_S$ is almost dominated by $f_T$, and consider a monotone increasing sequence $\seq{g}$ of $f_T$-dominated representable positive functionals such that $\lim\limits_{n\to\infty} g_n(A)=f_S(A)$ for all $A\in\hsh$. Then, due to Lemma \ref{L:g=fS}, $g_n=f_{S_n}$ for any $n$ with some positive operator $S_n\in\trh$. Clearly, $S_n\leq S_{n+1}\leq S$, and $S_n\leq c_nT_n$. Thus the  $T$-absolute continuity of $S$ follows once we prove that 
\begin{eqnarray}\label{E:SntoS}
\sip{S_nx}{x}\to \sip{Sx}{x} \qquad \textrm{for all $x\in\hil$}.
\end{eqnarray}
To see this, let $e\in\hil,\|e\|=1$, and denote by $P$ the orthogonal projection onto the one-dimensional subspace spanned by $e$. Then 
\begin{align*}
\sip{S_ne}{e}=\trace(S_nP)=f_{S_n}(P)\to f_S(P)=\sip{Se}{e},
\end{align*}
which gives \eqref{E:SntoS}. Assume conversely that $S$ is $T$-absolutely continuous. Consider a monotone increasing sequence $\seq{S}$ of $T$-dominated positive operators such that $\sip{S_nx}{x}\to \sip{Sx}{x}$ for each $x\in\hil$. It is clear by $S_n\leq S$ that $S_n\in\trh$ and that $f_{S_n}\leq f_{S_{n+1}}\leq f_{S}$, $f_{S_{n}}\leq c_nf_T$. To conclude that $f_S$ is $f_T$-almost dominated it is enough to prove that 
\begin{align}\label{E:fSntofS}
f_{S_n}(A)\to f_S(A)\qquad \textrm{for all $A\in\bh$}.
\end{align}
With this aim, let $\varepsilon>0$ and choose an orthonormal basis $E$ in $\hil$. Fix a finite subset $F(=F(\varepsilon))$ of $E$ and an integer $N(=N(\varepsilon,F))$ such that 
\begin{align*}
\trace(S)-\sum_{e\in F}\sip{Se}{e}<\frac{\varepsilon}{2}\quad\mbox{and}\quad
\sum_{e\in F}\sip{(S-S_N)e}{e}<\frac{\varepsilon}{2}.
\end{align*}
Then for any integer $n$ with $n\geq N$ we infer that 
\begin{align*}
\trace(S-S_n)\leq \trace(S-S_N)\leq \sum_{e\in E\setminus F} \sip{Se}{e}+\sum_{e\in F}\sip{(S-S_N)e}{e}<\frac{\varepsilon}{2}+\frac{\varepsilon}{2}=\varepsilon.
\end{align*}
Hence, by inequality (\ref{trace inequality}), $f_{S_n}(A)\to f_{S}(A)$ for all $A\in\mathscr{B}(H)$.
\end{proof}

\section{On the uniqueness of Lebesgue decomposition of normal states}

 From Theorem \ref{T:maintheorem} above we conclude immediately that the transformation $T\mapsto f_T$ between positive trace class operators and normal positive functionals preserves the Lebesgue decomposition. That is to say, $S=S_1+S_2$ is a Lebesgue decomposition relative to $T$ if and only if $f_S=f_{S_1}+f_{S_2}$ is a Lebesgue decomposition relative to $f_T$. Moreover, considering two normal positive functionals  $f,g$ on $\bh$, the $f$-regular part $g_r$ of $g$ is the image of $[T]S$ under that map, where $f=f_T$ and $g=f_S$:
\begin{equation*}
g_r=f_{[T]S}.
\end{equation*}
Hence, as a straightforward consequence of  \cite[Theorem 6]{Ando} 
we can establish the following uniqueness result.
\begin{corollary}
 Let $f,g$ be normal positive functionals on $\bh$ with representing operators $T$ and $S$, respectively. The Lebesgue decomposition of $g$ relative to $f$ is unique precisely if $g_r\leq cf$, or equivalently, if $[T]S\leq cT$ for some $c>0$. 
\end{corollary}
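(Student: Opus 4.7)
The plan is to reduce the corollary to Ando's operator characterization \cite[Theorem 6]{Ando} via the correspondence $T\mapsto f_T$, using the preceding Theorem \ref{T:maintheorem} together with the fact that components of a Lebesgue decomposition of a normal functional are themselves automatically normal. Concretely, I would argue that the map $T\mapsto f_T$ sets up a bijection between the set of Lebesgue decompositions of $S$ relative to $T$ (in the operator sense of Ando) and the set of Lebesgue decompositions of $g=f_S$ relative to $f=f_T$ (in the functional sense).

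\textbf{Step 1 (Normality of the parts).} Suppose $g=g_0+g_1$ is any Lebesgue decomposition, so $g_0$ is almost dominated by $f$ and $g_1$ is $f$-singular. Since $0\leq g_0,g_1\leq g=f_T$ with $f_T$ normal, Lemma \ref{L:g=fS}(b) forces $g_0$ and $g_1$ to be normal as well. Hence there exist (unique) positive trace class operators $S_0,S_1$ with $g_0=f_{S_0}$, $g_1=f_{S_1}$, and the order-preserving property of $T\mapsto f_T$ gives $S=S_0+S_1$.

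\textbf{Step 2 (Transferring the decomposition).} By Theorem \ref{T:maintheorem}, $g_0=f_{S_0}$ is almost dominated by $f_T$ iff $S_0\ll T$, and $g_1=f_{S_1}$ is $f_T$-singular iff $S_1\perp T$. Thus $g=g_0+g_1$ is a Lebesgue decomposition of $g$ with respect to $f$ exactly when $S=S_0+S_1$ is a Lebesgue decomposition (in Ando's sense) of $S$ with respect to $T$. In particular, uniqueness in the functional setting is equivalent to uniqueness in the operator setting, and the extremal (regular) parts correspond: $g_r=f_{[T]S}$.

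\textbf{Step 3 (Invoking Ando).} Ando \cite[Theorem 6]{Ando} states that the operator Lebesgue decomposition of $S$ with respect to $T$ is unique precisely when $[T]S\leq cT$ for some $c>0$. Combining this with Step~2 yields uniqueness of the Lebesgue decomposition of $g$ relative to $f$ iff $[T]S\leq cT$. Finally, since $T\mapsto f_T$ is order-preserving (and injective on positive trace class operators), the inequality $[T]S\leq cT$ is equivalent to $g_r=f_{[T]S}\leq c f_T=cf$.

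I do not expect a genuine obstacle: the only nontrivial input is the automatic normality of the summands in Step 1, which is exactly Lemma \ref{L:g=fS}(b), and the nontrivial content of the correspondence, which is Theorem \ref{T:maintheorem}. The rest is bookkeeping with the order-preserving bijection between positive trace class operators and normal positive functionals.
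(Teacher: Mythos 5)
Your proposal is correct and follows essentially the same route as the paper: the paper likewise deduces the corollary from Theorem \ref{T:maintheorem} (with Lemma \ref{L:g=fS} supplying normality of the summands) by noting that $T\mapsto f_T$ carries Lebesgue decompositions of $S$ relative to $T$ bijectively onto those of $g$ relative to $f$ with $g_r=f_{[T]S}$, and then invoking Ando's Theorem 6. Your write-up merely makes explicit the bookkeeping the paper leaves implicit.
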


However, it is not clear from the preceding corollary whether the Le\-besgue decomposition among  normal positive functionals is unique. In  Theorem \ref{T:nonunique} below  we are going to provide a complete answer to this problem. But first we make a short observation to be used mainly in the proof of the main result: 
\begin{proposition}
Given  a sequence $\seq{\lambda}\in\ell^1$  of positive numbers, there is another sequence $\seq{\mu}\in\ell^1$ of positive numbers such that $(\mu_n/\lambda_n)_{\nen}\notin \ell^{\infty}$.
\end{proposition}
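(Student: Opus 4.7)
The plan is to exploit the fact that any summable positive sequence must tend to zero, so we can extract a sparse subsequence $(n_k)$ along which $\lambda_{n_k}$ decays as fast as we wish. On that subsequence we will place values $\mu_{n_k}$ that are summable but much larger than $\lambda_{n_k}$; off the subsequence we simply copy $\lambda_n$ (to keep $\mu_n$ strictly positive without ruining summability).

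Concretely, since $\sum \lambda_n<\infty$ we have $\lambda_n\to 0$, so I can choose indices $n_1<n_2<\cdots$ with
\begin{equation*}
 \lambda_{n_k}<\frac{1}{k^3},\qquad k\in\dupN.
\end{equation*}
Define
\begin{equation*}
 \mu_n:=\begin{cases} 1/k^2 & \text{if } n=n_k\text{ for some }k,\\ \lambda_n & \text{otherwise.}\end{cases}
\end{equation*}
Each $\mu_n$ is a positive number, and
\begin{equation*}
\sum_{n=1}^\infty \mu_n \;=\; \sum_{k=1}^\infty \frac{1}{k^2}\;+\!\!\sum_{n\notin\{n_k\}}\!\!\lambda_n\;\le\;\sum_{k=1}^\infty \frac{1}{k^2}+\sum_{n=1}^\infty\lambda_n\;<\;\infty,
\end{equation*}
so $(\mu_n)\in\ell^1$. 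On the other hand, at the chosen indices
\begin{equation*}
 \frac{\mu_{n_k}}{\lambda_{n_k}}\;=\;\frac{1/k^2}{\lambda_{n_k}}\;>\;\frac{k^3}{k^2}\;=\;k,
\end{equation*}
hence $(\mu_n/\lambda_n)_{n\in\dupN}$ is unbounded and thus not in $\ell^\infty$.

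There is really no conceptual obstacle here; the only care required is to make sure the replacement rule keeps every $\mu_n$ strictly positive while simultaneously preserving summability and producing an unbounded ratio, all of which is handled by the dichotomy above. If one prefers a one-line construction avoiding the case split, setting $\mu_n:=\sqrt{\lambda_n}\,\cdot\,\lambda_n^{1/2}$—i.e.\ choosing $\mu_n$ to be $\lambda_n^{1/2}$ on a subsequence where $\lambda_n$ decays fast enough—works equally well, but the version above is slightly cleaner to verify.
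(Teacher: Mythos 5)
Your construction is correct, but it takes a genuinely different route from the paper. You argue directly: since $\sum_n\lambda_n<\infty$ forces $\lambda_n\to0$, you extract indices $n_1<n_2<\cdots$ with $\lambda_{n_k}<1/k^3$, plant $\mu_{n_k}=1/k^2$ there and copy $\lambda_n$ elsewhere; summability of $\seq{\mu}$ and the unbounded ratio $\mu_{n_k}/\lambda_{n_k}>k$ are then immediate. The paper instead argues by contradiction with soft functional analysis: if no such $\seq{\mu}$ existed, the injective bounded map $A:\ell^{\infty}\to\ell^1$, $\seq{x}\mapsto(\lambda_nx_n)_{\nen}$ would be surjective, hence a linear homeomorphism by the open mapping theorem, contradicting the fact that $\ell^1$ is separable while $\ell^{\infty}$ is not. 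Your argument is more elementary (no Baire-category input) and exhibits $\seq{\mu}$ explicitly, which is arguably preferable here since the proposition feeds into the explicit construction of the operator $S$ in the proof of Theorem \ref{T:nonunique}\,b); the paper's argument is shorter and shows the failure is structural rather than an artifact of a particular choice. One caveat: your closing ``one-line'' alternative is garbled --- as written, $\sqrt{\lambda_n}\cdot\lambda_n^{1/2}=\lambda_n$, which gives the constant ratio $1$. The intended variant (take $\mu_{n_k}=\sqrt{\lambda_{n_k}}$ along a subsequence with $\sum_k\sqrt{\lambda_{n_k}}<\infty$, so that $\mu_{n_k}/\lambda_{n_k}=\lambda_{n_k}^{-1/2}\to\infty$) does work, but you should either state it correctly or delete the remark; the main construction stands on its own.
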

\begin{proof}
Assume indirectly that there is no $(\mu_n)_{n\in\dupN}$ with the prescribed properties. It follows then that for any $\seq{x}$ in $\ell^1$ we have $(x_n/\lambda_n)_{\nen}\in \ell^{\infty}$, or in other words the one-to-one continuous mapping $A:\ell^{\infty}\to \ell^1, \seq{x}\mapsto (\lambda_n x_n)_{\nen}$ is onto, and hence a linear homeomorphism thanks to the Banach open mapping theorem. But this is impossible (as $\ell^1$ is separable and $\ell^{\infty}$ is not).
\end{proof}

\begin{theorem}\label{T:nonunique}
Let $f$ be a normal positive functional on $\bh$ and let $T\in\trh$ stand for its representing operator. 
\begin{enumerate}[\upshape a)]
 \item If $T$ has finite rank then any normal positive functional $g$  possesses a unique $f$-Lebesgue decomposition.
 \item If $T$ has infinite rank then there exists  a normal positive functional $g$  such that the $f$-Lebesgue decomposition of $g$ is not unique. 
\end{enumerate}
\end{theorem}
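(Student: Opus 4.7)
The approach is to reduce both parts of Theorem \ref{T:nonunique} to the preceding corollary, which characterizes uniqueness of the $f$-Lebesgue decomposition of $g = f_S$ by the condition $[T]S \leq cT$ for some $c>0$. Thus part (a) amounts to showing that this bound always holds when $T$ has finite rank, while part (b) amounts to constructing a single $S$ violating it when $T$ has infinite rank.

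For part (a), I would first observe that $R \leq \gamma T$ forces $\ker T \subseteq \ker R$ (since $Tx = 0$ gives $\sip{Rx}{x} \leq \gamma \sip{Tx}{x} = 0$, hence $Rx = 0$), and by self-adjointness this can be rephrased as $R = PRP$ where $P$ is the orthogonal projection onto $\overline{\ran T}$. When $T$ has finite rank this subspace is finite dimensional and closed, and the identity $R = PRP$ is preserved under strong-operator limits of increasing $T$-dominated sequences, so $[T]S = P[T]SP$ as well. On the finite-dimensional subspace $\ran P$ the restriction of $T$ is strictly positive, hence $T \geq \lambda_{\min}\, P$ where $\lambda_{\min}$ denotes the smallest positive eigenvalue of $T$. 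Combining gives $[T]S \leq \|[T]S\|\,P \leq (\|S\|/\lambda_{\min})\,T$, and the corollary delivers uniqueness.

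For part (b), I would diagonalize $T$ via the spectral theorem as $Tx = \sum_n \lambda_n \sip{x}{e_n} e_n$, where $(e_n)$ is an orthonormal sequence and $(\lambda_n)$ is a strictly positive summable sequence (with infinitely many terms, since $T$ has infinite rank). Applying the preceding Proposition to $(\lambda_n)$ yields a summable positive sequence $(\mu_n)$ with $(\mu_n/\lambda_n) \notin \ell^\infty$. Setting $Sx := \sum_n \mu_n \sip{x}{e_n} e_n$, the partial sums of this series are monotone increasing, each dominated by a multiple of $T$, and converge strongly to $S$, so $S \ll T$ and consequently $[T]S = S$. However, $S \leq cT$ would force $\mu_n \leq c\lambda_n$ for every $n$, contradicting the unboundedness of $(\mu_n/\lambda_n)$. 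The corollary then yields non-uniqueness of the $f$-Lebesgue decomposition of $g := f_S$.

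The point that requires most care is the passage of the support condition $R = PRP$ through the strong-operator limit defining $[T]S$: this relies essentially on $\overline{\ran T}$ being closed and, for the final bound, finite dimensional, which is precisely where the finite-rank hypothesis in part (a) is indispensable. Once this is in hand, both parts fall out of the corollary, with part (b) also making essential use of the preceding Proposition.
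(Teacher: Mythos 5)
Your proposal is correct and follows essentially the same route as the paper: reduce to Ando's characterization (via the preceding corollary and Theorem \ref{T:maintheorem}), prove $[T]S\leq cT$ in the finite-rank case, and in the infinite-rank case use the Proposition to build a diagonal $S$ that is $T$-absolutely continuous but not $T$-dominated. The only (harmless) variation is in part a), where the paper obtains $[T]S\leq cT$ from the range inclusion $\ran([T]S)^{1/2}\subseteq\ran T^{1/2}$, while you use the support projection $P$ and the explicit bound $T\geq\lambda_{\min}P$ on the finite-dimensional range.
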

\begin{proof}
a) Suppose $f$ is a normal state on $\bh$ with finite rank representing operator $T\in\trh$. Then there exist a finite orthonormal system $e_1,\ldots, e_n$ and $\lambda_1,\ldots,\lambda_n$ positive numbers such that  
\begin{align}
Tx=\sum_{k=1}^{n}\lambda_k\sip{x}{e_k}e_k,\qquad \textrm{for all $x\in\hil$.}
\end{align}
Let $g$ be any normal state with representing operator $S\in\trh$ and consider the $T$-Lebesgue decomposition $S=[T]S+(S-[T]S)$  according to Ando \cite{Ando}. Here, the $T$-closable part $[T]S$ satisfies 
\begin{align*}
\ran ([T]S)^{1/2}\subseteq\ker([T]S)^{\perp}\subseteq\ker T^{\perp}=\ran T^{1/2},
\end{align*}  
whence we infer that $[T]S\leq cT$ with appropriate $c\geq0.$ By Ando's uniqueness theorem (\cite[Theorem 3]{Ando}), the $T$-Lebesgue decomposition of $S$ is unique. In the view of Lemma \ref{L:g=fS} and Theorem \ref{T:maintheorem}, the $f$-Lebesgue decomposition of $g$ must be unique as well. \\
b) Consider now a normal state $f$ with infinite rank representing operator $T\in\trh$. By the Hilbert--Schmidt theory of compact selfadjoint operators there exists an orthonormal sequence $\seq{e}$ in $\hil$ and a sequence $\seq{\lambda}$ in $\ell^1$ such that 
\begin{align*}
Tx=\sum_{n=1}^{\infty}\lambda_n\sip{x}{e_n}e_n,\qquad \textrm{for all $x\in\hil$.}
\end{align*}
Choose a sequence $\seq{\mu}\in\ell^1$ of positive numbers such that  $(\mu_n/\lambda_n)_{\nen}\notin\ell^{\infty}$ and consider $S\in\trh$ defined by    
\begin{align*}
Sx=\sum_{n=1}^{\infty}\mu_n\sip{x}{e_n}e_n,\qquad \textrm{ $x\in\hil$.}
\end{align*}
An easy calculation shows that the operator sequence $\seq{S}$,
\begin{align*}
S_nx=\sum_{k=1}^{n}\mu_k\sip{x}{e_k}e_k,\qquad x\in\hil,
\end{align*}
fulfills 
\begin{equation}\label{E:S_n}
0\leq S_n\leq S_{n+1},\qquad \|S_n-S\|\to0,\qquad S_n\leq c_nT,
\end{equation}
for each integer $n$ and for suitable $c_n\geq0$. Hence, $S=[T]S$. Observe that $\mu_n\leq c_n\lambda_n$ holds for each $n$, hence $\seq{c}$ is necessarily unbounded. Hence, although $S$ is $T$-closable due to \eqref{E:S_n}, it cannot be $T$-dominated. Consequently,  the $T$-Lebesgue decomposition of $S$ is not unique by Ando's result \cite[Theorem 6]{Ando}. Thus the $f$-Lebesgue decomposition of $g:=f_S$ also fails to be unique due to Theorem \ref{T:maintheorem}. 
\end{proof}
\begin{remark}
The commutative Gelfand--Naimark theorem and the Riesz representation theorem jointly show that the Lebesgue decomposition on a commutative $C^*$-algebra is always unique, see \cite{Szucs_abs}. One might therefore expect  that non-commutativity is responsible for the absence of uniqueness. But this is not the case: Theorem \ref{T:nonunique} a) says in particular that the Lebesgue decomposition on $\bh$ is necessarily unique if $\hil$ is finite dimensional. So it would be nice to know which property of the underlying algebra actually results uniqueness and which one causes non-uniqueness for the decomposition. 
\end{remark}

\bigskip
\address{ 
Zolt\'an Sebesty\'en \\
Department of Applied Analysis\\ E\"otv\"os L. University\\ P\'azm\'any P\'eter s\'et\'any 1/c.\\ Budapest H-1117\\ Hungary
}
{sebesty@cs.elte.hu}
\address{
Zsigmond Tarcsay \\
Department of Applied Analysis\\ E\"otv\"os L. University\\ P\'azm\'any P\'eter s\'et\'any 1/c.\\ Budapest H-1117\\ Hungary
}
{tarcsay@cs.elte.hu}

\address{
Tam\'as Titkos\\
Alfr\'ed R\'enyi Institute of Mathematics\\
Hungarian Academy of Sciences\\Re\'altanoda u. 13-15.\\
Budapest H-1053\\ Hungary
}
{titkos.tamas@renyi.mta.hu}


\begin{thebibliography}{99}

\bibitem{Ando}
T. Ando,
\newblock Lebesgue-type decomposition of positive operators,
{\em Acta Sci. Math. (Szeged)}, \textbf{38} (1976), 253--260.

\bibitem{Araki}
H. Araki,
\newblock Bures distance function and a generalization of Sakai’s non-commutative Radon-Nikodym
theorem,
{\em Publ. RIMS, Kyoto University}, \textbf{8} (1972), 335-–362.


\bibitem{Gudder}
S. P. Gudder,
\newblock A Radon-Nikodym theorem for $^*$-algebras, 
\newblock {\em Pacific J. of Math.} \textbf{80} (1979), 141--149.


\bibitem{Hassi2009}
S.~Hassi, Z.~Sebesty{\'e}n, and H.~de~Snoo,
\newblock Lebesgue type decompositions for nonnegative forms,
\newblock {\em J. Funct. Anal.}, \textbf{257} (2009), 3858--3894.

\bibitem{KR1}
R. V. Kadison and J. R. Ringrose,
\newblock  {\em Fundamentals of the theory of operator algebras I.},
\newblock  Academic Press, New York, 1983.


\bibitem{Kosaki}
H. Kosaki,
\newblock Lebesgue decomposition of states on a von Neumann algebra,
\newblock {\em American Journal of Math.}, \textbf{107} (1985), 697--735.

\bibitem{Murphy}
G. J. Murphy, 
\newblock {\em $C^*$-algebras and operator theory},
\newblock Academic Press, Inc., Boston, MA, 1990.

\bibitem{palmer}
T. W. Palmer,
\newblock  {\em Banach Algebras and the General Theory of $^*$-Algebras II},
\newblock Cambridge University Press, Cambridge, 2001.

\bibitem{Pedersen}
G. K. Pedersen,
\newblock  {\em Analysis now},
\newblock Springer Verlag, New York, 1989.

\bibitem{Sakai}
S. Sakai,
\newblock{A Radon–Nikodym theorem in W*-algebras},
{\em Bull. Am. Math. Soc.} \textbf{71}, {(1965) 149--151.}

\bibitem{SSZT}
Z.~Sebesty{\'e}n, Zs. Sz\H ucs, and Zs. Tarcsay,
\newblock Extensions of positive operators and functionals,
\newblock \emph{Linear Algebra Appl.}, \textbf{472} (2015), 54--80.



\bibitem{Szucs2013}
\newblock{Zs.~Sz\H{u}cs},
\newblock{The Lebesgue decomposition of representable forms over algebras},
{\em J. Operator Theory}, \textbf{70} (2013), 3-31.

\bibitem{Szucs_abs}
Zs. Sz\H ucs,
\newblock Absolute continuity of positive linear functionals,
\newblock {\em Banach J. of Math. Anal.}, \textbf{66} (2015), 201--247.





\bibitem{Tarcsay_RN}
\newblock{Zs.~Tarcsay},
\newblock{Radon--Nikodym theorems for nonnegative forms, measures and representable functionals,}
{\em Complex Analysis and Operator Theory}, 
\newblock online first; DOI: 10.1007/s11785-014-0437-4;

\bibitem{Tarcsay_repr}
\newblock{Zs.~Tarcsay},
\newblock{Lebesgue decomposition for representable functionals on $^*$-al\-geb\-ras,}
{\em Glasgow Math. Journal}, online first; DOI: 10.1017/S0017089515000300 



\bibitem{Tarcsay_parallel}
Zs. Tarcsay,
\newblock On the parallel sum of positive operators, forms, and functionals,
\newblock {\em Acta Math. Hungar.,} \textbf{147} (2015), 408--426.



\end{thebibliography}
\end{document}